\renewcommand{\thefootnote}{\fnsymbol{footnote}}
\long\def\sfootnote[#1]#2{\begingroup
\def\thefootnote{\fnsymbol{footnote}}\footnote[#1]{#2}\endgroup}
\newtheorem{theorem}{Theorem}
\newtheorem{definition}[theorem]{Definition}
\newtheorem{corollary}[theorem]{Corollary}
\newtheorem{example}[theorem]{Example}
\newtheorem{remark}[theorem]{Remark}
\newenvironment{proof}{\noindent\mbox{\bf Proof.}}
{\hfill\mbox{\ding{111}}\bigskip}
\begin{document}



\hfill
{\bf {\large G\"odel's Incompleteness Phenomenon---Computationally}}

%

\bigskip

\bigskip

\bigskip

\hfill {\em {Saeed Salehi}}

\smallskip

\hfill { {University of Tabriz \& IPM  (IRAN)}}

\pagestyle{fancy}
\lhead[]{}
\chead[]{}
\rhead[]{}
\lfoot[]{}
\cfoot[]{}
\rfoot[]{{{\em Philosophia Scienti{\ae}}, 18 (3), 2014, pp--qq.}}

\renewcommand{\headrulewidth}{0pt}
\renewcommand{\footrulewidth}{0pt}

\bigskip

\begin{abstract}
\noindent  We argue that
G\"odel's completeness theorem is equivalent to completability of consistent theories, and G\"odel's incompleteness theorem is equivalent to the fact that this completion is not constructive, in the sense that there are some consistent and recursively enumerable theories which cannot  be extended to any complete and consistent and recursively enumerable theory. Though any consistent and decidable theory can be extended to a complete and consistent  and decidable    theory. Thus deduction and consistency are not decidable in logic, and an analogue of Rice's Theorem holds for recursively enumerable theories: all the non-trivial properties of such theories are undecidable.

\medskip

\noindent {\bf 2010 Mathematics Subject Classification}:  03B25 $\cdot$  03D35 $\cdot$ 03F40.

\medskip

\noindent {\bf Keywords}: G\"odel's Completeness Theorem  $\cdot$ Henkin's Proof of the Completeness Theorem $\cdot$  Decidable Theory $\cdot$ Recursively Enumerable
Theory $\cdot$ G\"odel's First Incompleteness Theorem $\cdot$   Craig's Trick $\cdot$ Rosser's Trick $\cdot$ Turing's Halting Problem $\cdot$  Rice's Theorem $\cdot$    Kleene's T Predicate.

\medskip
\noindent {\bf Acknowledgements}. {The author is partially supported by grant  N$^{\underline{\rm o}}$~92030033 of the Institute for Research in Fundamental Sciences $(\bigcirc\hspace{-1.5ex}/\hspace{-1.1ex}\bullet\hspace{-1.1ex}/\hspace{-1.5ex}\bigcirc$ $\mathbb{I}\mathbb{P}\mathbb{M})$,  Tehran, Iran.}
\end{abstract}

\section*{Introduction}\label{intro}
The incompleteness theorem of Kurt G\"odel has been regarded as the most
significant mathematical result in the twentieth century,
and  G\"odel's completeness theorem
is a kind of the fundamental theorem of mathematical logic.
To avoid confusion between these two results, it is argued in the
literature that the completeness theorem is about the {\em semantic} completeness of first order logic,
and the incompleteness theorem is about the {\em syntactic} incompleteness of sufficiently strong
first order logical theories. In this paper we look at these two theorems
from another perspective.
We will argue that G\"odel's completeness theorem is a kind of completability theorem,
and G\"odel--Rosser's incompleteness theorem is a kind of  incompletability theorem
in a constructive manner. By G\"odel's semantic incompleteness
theorem we mean the statement that {\sl any sound and sufficiently strong and recursively enumerable theory is incomplete}.
By G\"odel--Rosser's incompleteness theorem we mean the statement that {\sl any
consistent and sufficiently strong and recursively enumerable theory is incomplete}. G\"odel's original incompleteness
theorem's assumption is between soundness and consistency; it assumes $\omega-$consistency of
sufficiently strong and recursively enumerable theories which are to be proved  incomplete.

It is noted in the literature that the existence of a non--recursive  but
recursively enumerable set can prove G\"odel's semantic incompleteness theorem (see e.g.
[Lafitte 2009]   or  [Li \& Vit\'{a}nyi 2008]).
This beautiful proof is most likely first proposed by [Kleene 1936] and Church; below we will
give an account of this proof after Theorem \ref{pr-unre}.
A clever modification of this proof
shows G\"odel--Rosser's
(stronger) incompleteness theorem, and in fact provides an elementary and
nice proof of G\"odel--Rosser's theorem other than the classical Rosser's trick ([Rosser 1936]).
This is called {\em Kleene's Symmetric Form of G\"odel's Incompleteness Theorem} (see [Beklemishev 2010]) originally published in [Kleene 1950] and later in the book [Kleene 1952].
Indeed, G\"odel's  semantic incompleteness theorem is equivalent to the existence of a non--recursive but recursively enumerable set, and also G\"odel--Rosser's (constructive) incompleteness theorem is equivalent to the existence of a pair of recursively (effectively) inseparable  recursively enumerable sets.

We will present a theory which is computability theoretic in nature, in a first order language which
 does not contain any arithmetical operations like addition or multiplication, nor set theoretic relation like membership nor
sting theoretic operation like concatenation. We will use a  ternary relation symbol $\tau$ which resembles Kleene's T predicate and our theory resembles Robinson's
\texttt{R} arithmetic (see [Tarski, et.\! al.\!  1953]). The proofs avoid using the diagonal (or fixed--point) lemma which is
highly counter--intuitive and a kind of `pulling a rabbit out of the hat'
(see [Wasserman 2008]); the proofs are also constructive,
in the sense that given a recursively enumerable theory that can interpret  our theory one can
algorithmically produce an independent sentence.  For us the simplicity
of the proofs and elementariness of the arguments are of essential importance.
Though we avoid coding sentences and proofs and other syntactic notions, coding
programs is needed for interpreting the $\tau$ relation. We also do not need any mathematical definition
for algorithms or programs (like recursive functions or Turing machines etc); all we  need
is the finiteness of programs (every program is a finite string of {\sc ascii}\footnote{\url{http://www.ascii-code.com/}} codes)
and the finiteness of input and time of computation
(which can be coded or measured by natural numbers). So, Church's Thesis (that every
intuitively computable function is a recursive function, or a function defined
rigorously in a mathematical framework)
is not used in the arguments.


\lhead[\thepage]{{\em  G\"odel's Incompleteness Phenomenon---Computationally}}
\chead[]{}
\rhead[{\em Saeed Salehi}]{\thepage}
\lfoot[]{\footnotesize{18(3), 2014, pp--qq.}}
\cfoot[]{}
\rfoot[\footnotesize{\sl Philosophia Scienti{\ae}}]{}

\section*{Completeness and Completability}
In mathematical logic, a {\em theory} is said to be a set of sentences,  in a fixed language (see e.g. [Chiswell \& Hodges 2007]; in [Kaye 2007] for example the word ``theory" does not appear in this sense at all, and instead ``a set of sentences" is used).  Sometimes a theory is required to be closed under (logical) deduction, i.e., a set of sentences $T$ is called a theory if for any sentence $\varphi$ which satisfies $T\vdash\varphi$ we have $\varphi\in T$ (see e.g. [Enderton 2001]). Here, by a {\em theory} we mean any set of sentences (not necessarily closed under deduction). Syntactic completeness of a theory is usually taken to be {\em negation}--completeness: a theory $T$ is {\em complete} when for any sentence $\varphi$, either $T\vdash\varphi$ or $T\vdash\neg\varphi$. Let us look at the  completeness with respect to  other connectives:

\begin{definition}[Completeness]\label{def-completeness}{\rm
A theory $T$ is called

$\bullet$  $\neg-$complete when for any sentence  $\varphi$:
\newline\centerline{$T\vdash\neg\varphi \iff T\not\vdash\varphi$.}

 $\bullet$  $\wedge-$complete when for any sentences $\varphi$ and $\psi$:
\newline\centerline{$T\vdash\varphi\wedge\psi \iff T\vdash\varphi\text{ and }T\vdash\psi$.}

$\bullet$  $\vee-$complete when for any sentences $\varphi$ and $\psi$:
\newline\centerline{$T\vdash\varphi\vee\psi \iff T\vdash\varphi\text{ or }T\vdash\psi$.}

$\bullet$  $\rightarrow-$complete when for any sentences $\varphi$ and $\psi$:
\newline\centerline{$T\vdash\varphi\rightarrow\psi \iff \text{if }T\vdash\varphi\text{ then }T\vdash\psi$.}

$\bullet$  $\forall-$complete when for every formula $\varphi(x)$:
\newline\centerline{$T\vdash\forall x\varphi(x) \iff \text{for every } t, T\vdash\varphi(t)$.}

$\bullet$  $\exists-$complete when for every formula $\varphi(x)$:

{\qquad\qquad\qquad $T\vdash\exists x\varphi(x) \iff \text{for some } t, T\vdash\varphi(t)$.\hfill\DiamondShadowB}
}\end{definition}

Let us note that the half of $\neg-$completeness is {\em consistency}:
 a theory is called {\em consistent} when for every sentence $\varphi$, if $T\vdash\neg\varphi$ then $T\not\vdash\varphi$. Usually, the other half is called {\em completeness}, i.e., when   if $T\not\vdash\varphi$ then $T\vdash\neg\varphi$ for every sentence $\varphi$.

\begin{remark}\label{rem-completeness}{
Any theory is $\wedge-$complete and $\forall-$complete (in first--order logic).
Also, half of $\vee,\rightarrow,\exists-$completeness holds for all theories $T$; i.e.,

--- if $T\vdash\varphi$ or $T\vdash\psi$, then $T\vdash\varphi\vee\psi$;

--- if $T\vdash\varphi\rightarrow\psi$, then if $T\vdash\varphi$ then $T\vdash\psi$;

--- if $T\vdash\varphi(t)$ for some $t$, then $T\vdash\exists x\varphi(x)$.
}\hfill\DiamondShadowB\end{remark}

A {\em maximally consistent} theory is a theory $T$ which cannot properly be extended to a consistent theory; i.e., for any consistent theory $T'$ which satisfies $T\subseteq T'$ we have $T=T'$. The following is a classical result in mathematical logic (see e.g. [van Dalen 2013]).

\begin{remark}\label{rem-completable}A consistent theory is $\neg-$complete if and only if  is $\vee-$complete if and only if is $\rightarrow-$complete if and only if is maximally consistent.
\hfill\DiamondShadowB\end{remark}

Consistently maximizing a theory suggests using Zorn's Lemma or (equivalently) the Axiom of Choice, which is non--constructive in general. To see if one can do it constructively or not, we need to introduce some other notions. Before that let us note that $\exists-$completing a theory can be done constructively.

\begin{remark}\label{rem-ecompletable}
Any arbitrary first--order  consistent theory can be extended (constructively) to another consistent $\exists-$complete theory.
\hfill\DiamondShadowB\end{remark}

The main idea of the proof is that we add a countable set of constants $\{c_1,c_2,\cdots\}$
to the language, and then enumerate all the couples of formulas and variables
in the extended language as $\langle\varphi_1,x_1\rangle, \langle\varphi_2,x_2\rangle,\cdots$ and finally add the sentences $\exists x_1\varphi_1\rightarrow\varphi(c_{l_1}/x_1)$, $\exists x_2\varphi_2\rightarrow\varphi(c_{l_2}/x_2)$, $\cdots$ successively to the theory, where in each step $c_{l_i}$ is the first constant which does not appear in $\varphi_1,\ldots,\varphi_i$ and has not been used in earlier steps (see e.g. [Enderton 2001]).

Let us note that $\exists-$complete theories are sometimes called Henkin theories or Henkin--complete  or Henkin sets (see e.g. [van Dalen 2013]). These are used for proving G\"odel's Completeness Theorem by Henkin's proof.
The theory of a structure is the set of  sentences (in the language of that structure) which are true in that structure. It can be seen that theories of structures are $(\neg,\exists)-$complete theories. Conversely, for any $(\neg,\exists)-$complete theory $T$ one can construct a structure $\mathcal{M}$ such that $T$ is the theory of $\mathcal{M}$.

\begin{remark}\label{rem-allcomplete}
Any consistent theory can be extended to a  $(\neg,\exists)-$complete theory. Note that any $(\neg,\exists)-$complete theory is complete with respect to all the other connectives.
\hfill\DiamondShadowB\end{remark}

G\"odel's completeness theorem is usually proved by  showing that any consistent theory has a model (the model existence theorem---which is equivalent to the original completeness theorem). Note that for proving the model existence theorem it is shown that any consistent theory is extendible to a consistent $(\neg,\exists)-$complete theory, which then defines a structure which is a model of that theory. Thus, we can rephrase this theorem equivalently  as follows.

\bigskip

\noindent {\sc G\"odel's Completeness Theorem}: {\sl Any first--order consistent theory can be  extended to a consistent $(\neg,\exists)-$complete theory.}

\bigskip

This theorem can be considered as {\em the fundamental theorem of logic},
the same way that we have the fundamental theorem of arithmetic,
or the fundamental theorem of algebra, or the fundamental theorem of calculus.
We could also call this theorem, G\"odel's {\sl Completability} Theorem, for the above reasons.

\section*{Incompleteness and Incompletability}
We now turn our attention to  constructive aspects of the above theorem.
A possibly infinite set  can be constructive  when it is {\em decidable} or (at least)
 {\em recursively enumerable}.  A set $D$ is decidable when there exists a
  single--input algorithm which on any input $x$ outputs {\tt Yes} if $x\in D$ and
  outputs {\tt No} if $x\not\in D$. A set $R$ is called recursively enumerable
  ({\sc re} for short) when there exists  an input--free algorithm which outputs
  (generates) the elements of $R$ (after running).
  It is a classical result in Computability Theory that there exists an {\sc re}
  set which is not decidable (though, any decidable set is {\sc re}); see .e.g [Epstein \& Carnielli 2008].
   For a theory $T$ we can consider decidability or recursive enumerability of either  $T$ as a set of sentences, or the set of derivable sentences of $T$, i.e., ${\rm Der}(T)=\{\varphi\mid T\vdash\varphi\}$.
It can be shown that if $T$ is decidable or {\sc re} (as a set) then
${\rm Der}(T)$ is {\sc re}; of course when ${\rm Der}(T)$ is {\sc re}
then $T$ is {\sc re} as well, and by Craig's trick ([Craig 1953]) for
such a theory there exists a decidable set of sentences $\widehat{T}$ such that
${\rm Der}(T)={\rm Der}(\widehat{T})$. So, we consider {\sc re} theories only, and call  theory $T$ a {\em decidable theory} when ${\rm Der}(T)$ is a decidable set (of sentences). {\sc re} theories are sometimes called axiomatizable theories (in e.g. [Enderton 2001]).
Below we will show that there exists some decidable set of sentences ($\mathcal{T}$)  whose set of derivable sentences  is not decidable (though it must be {\sc re} of course).

\begin{definition}[{\sc re}, Decidable and {\sc re}--Completable]\label{def-redec}{\rm
A consistent theory

$\bullet\,\,\,T$ is called  an {\sl {\sc re} theory} when ${\rm Der}(T)$ is an {\sc re} set.

$\bullet\,\,\,T$ is called  a {\sl decidable} theory when ${\rm Der}(T)$ is a decidable set.

$\bullet\,\,\,T$ is called  {\sl {\sc re}--completable}   when there exists a  theory $T'$ extending $T$ (i.e., $T\subseteq T'$) such that  $T'$ is consistent, complete and {\sc re}.
}\hfill\DiamondShadowB\end{definition}

It is a classical fact that  complete {\sc re} theories are decidable
(see e.g. [Enderton 2001]): since by recursive enumerability of $T$ both $\{\varphi\mid T\vdash\varphi\}$ and $\{\varphi\mid T\vdash\neg\varphi\}$ are {\sc re} and by the completeness of $T$ we have $\{\varphi\mid T\not\vdash\varphi\}=\{\varphi\mid T\vdash\neg\varphi\}$, so the set ${\rm Der}(T)$ and its complement are both {\sc re}
and hence decidable (by Kleene's Complementation Theorem -- see e.g. [Berto 2009]).
Completeness is a logician's tool for decidability. Henkin's completion shows
that any {\sc re} decidable theory is {\sc re}--completable (see [Tarski, et.\! al.\!  1953]).
The main idea is that having a decidable theory $T$ we list all
the sentences in the language of $T$ as $\varphi_1,\varphi_2,\cdots$ and
then add $\varphi_i$ or $\neg\varphi_i$  in the $i$th step to $T$ as follows:
let $T_0=T$ and if $T_j$ is defined let $T_{j+1}=T_j\cup\{\varphi_j\}$
if $T_j\cup\{\varphi_j\}$ is consistent, otherwise let
$T_{j+1}=T_j\cup\{\neg\varphi_j\}$. Note that if $T_j$ is consistent,
then $T_{j+1}$ will be consistent as well (if $T_j\cup\{\varphi_j\}$
is inconsistent then $T_j\cup\{\neg\varphi_j\}$ must be consistent).
The theory $T'=\bigcup_{i\geqslant 0} T_i$ will be consistent and complete.
This was essentially Henkin's Construction for proving G\"odel's completeness theorem.
 The point is that if $T$ is decidable then so is any $T_i$ since they are
 finite extensions of $T$. Finally, $T'$ is a decidable theory, because for any given sentence $\varphi$ it should appear in the list $\varphi_1,\varphi_2,\cdots$, so say $\varphi=\varphi_n$. Now, for $i=1,2,\ldots,n$ we can decide whether $\varphi_i\in T'$ or $\neg\varphi_i\in T'$ inductively; and finally we can decide whether $T'\vdash\varphi$ or not
($T\not\vdash\varphi$ happens only when $T'\vdash\neg\varphi$).
So, for consistent {\sc re} theories we have the following inclusions:

\begin{center}
\begin{tabular}{| c c c c c c c |}
\hline
& & & & &  &\\
& {\rm Complete} & $\Longrightarrow$ & {\rm Decidable} & $\Longrightarrow$ & {\sc re}--{\rm Completable} & \\
& & & & & & \\
\hline
\end{tabular}
\end{center}

Below we will see that the converse conclusions do not hold (Remark~\ref{remarkt}).
Whence, by contrapositing  the above conclusions we will have the following inclusions and non--inclusions for some consistent {\sc re} theories:

\begin{center}
\begin{tabular}{| c c c c c c c |}
\hline
& & & & & & \\
&  & $\Longrightarrow$ &   & $\Longrightarrow$ &  & \\[-0.75em]
& {\sc re}--{\rm Incompletable} &   & {\rm Undecidable} &   & {\rm Incomplete} & \\[-0.75em]
&  & $\not\Longleftarrow$ &   & $\not\Longleftarrow$  &  & \\
& & & & & & \\
  \hline
\end{tabular}
\end{center}

\noindent Incomplete theories abound in mathematics: every theory which has finite models but does not fix the number of elements (e.g. theory of groups, rings, fields, lattices, etc.) is an incomplete theory. By encoding Turing machines into a first order language one can obtain an undecidable theory (see e.g. [Boolos, et.\! al.\!  2007]). But demonstrating an {\sc re}--incompletable theory is a difficult task and it is in fact G\"odel's Incompleteness Theorem. {\sc re}--incompletable theories are know as ``essentially undecidable" theories in the literature (starting from [Tarski, et.\! al.\!  1953]).
Comparing G\"odel's Completeness Theorem with his Incompleteness Theorem,
we come to the following conclusion.

\noindent {\sl  Every consistent theory can be extended to a   $(\neg,\exists)-$complete
 theory {\rm (G\"odel's Completeness Theorem)} and this completion preserves
 decidability, i.e., every consistent  and decidable theory can be extended to a consistent, decidable
 and $(\neg,\exists)-$complete
 theory. But this completion cannot be necessarily effective; i.e., there are some consistent {\sc re} theories whose all consistent completions are non--{\sc re} {\rm (G\"odel's Incompleteness Theorem)}}.

So, calling the completeness theorem of G\"odel {\sl Completability Theorem} we can call (the first) incompleteness theorem of G\"odel (and Rosser) {\sl {\sc re}--Incompletabiliy Theorem}.

\subsection*{An Undecidable but RE--Completable Theory}

In this paper we introduce an incomplete but {\sc re}--completable theory ($\mathcal{T}$) and a novel   {\sc re}--incompletable theory ($\mathcal{S}$),
and for that we consider the theory of zero, successor and order in the set of natural numbers, i.e., the structure $\langle\mathbb{N},0,s,<\rangle$ in which $0$ is a constant symbol, $s$ is a unary function symbol and $<$ is a binary relation symbol (interpreted as the zero element, the successor function and the order relation, respectively). This theory is known to be decidable ([Enderton 2001]), and in fact can be finitely axiomatized as follows

$A_1:  \forall x\forall y (x<y\rightarrow y\not<x)$,

$A_2:  \forall x\forall y\forall z(x<y\wedge y<z\rightarrow x<z)$,

$A_3: \forall x\forall y (x<y\vee x=y\vee y<x)$,

$A_4:  \forall x\forall y (x<y\longleftrightarrow s(x)<y\vee s(x)=y)$,

$A_5:  \forall x (x\not<0)$,

$A_6:  \forall x\big(0<x\rightarrow\exists v (x=s(v))\big)$.

 \noindent The axioms $A_1,A_2,A_3$ state that $<$ is a (linear and transitive and antisymmetric, thus a) total ordering, $A_4$ states that every element has a successor (the successor $s(x)$ of $x$ satisfies
{$\forall y \big(x<y\leftrightarrow s(x)<y\vee s(x)=y\big)$}), $A_5$ states that there exists a least element (namely $0$) and finally $A_6$ states that every non--zero element has a predecessor.
  One other advantage of the language $\{0,s,<\}$ is that we have terms for every natural number $n\in\mathbb{N}$:
  \newline\centerline{$\underline{n}$ is the $\{0,s\}-$term $s^n0=\underbrace{s\cdots s}_{n-\text{times}}0$.}
 To the language $\{0,s,<\}$ we add a ternary relation symbol $\tau$  interpreted as:

\begin{center}for $e,x,t\in\mathbb{N}$ the relation $\tau(e,x,t)$ holds when
\\
$e$ is a code for a single--input program which  halts on input $x$ by time $t$.
\end{center}

\noindent
Timing of  a program can be measured either by the number of steps that the
program runs or just by the conventional seconds, minutes, hours, etc. and  programs
(say in a fixed programming language like \textbf{C++}) can be coded by natural numbers as follows (for
example):
Any such program  is a (long) string  of {\sc ascii} codes,
and every {\sc ascii} code can be thought of as  8 symbols of \textsf{0}'s and \textsf{1}'s
(so, there are 256 {\sc ascii} codes). So, any program is a string of \textsf{0}'s and \textsf{1}'s
(whose length is a multiple of 8). The set of \textsf{0,1}--strings can be coded by natural
numbers in the following way:

\medskip
\begin{tabular}{c|c|c|c|c|c|c|c|c|c|c|c|c}
$\lambda$ & \textsf{0} & \textsf{1} & \textsf{00} & \textsf{01} & \textsf{10} & \textsf{11} & \textsf{000} & \textsf{001} &  \textsf{010} & \textsf{011} & \textsf{100} & $\cdots$ \\
\hline
0 & 1 & 2 & 3 & 4 & 5 & 6 & 7 & 8 & 9 & 10 & 11 & $\cdots$  \\
\end{tabular}
\medskip

\noindent This coding works as follows: given a string of \textsf{0}'s and \textsf{1}'s (take for example \textsf{0110}), put a \textsf{1} at the beginning of it (in our example \textsf{10110}) and compute its binary value  (in our example 2+2$^2$+2$^4$=22) and subtract 1 from it (in our example 21) to get the natural number which is the code of the original string. Conversely, given a natural number (for example 29) find the binary representation of its successor (in our example 30=2+2$^2$+2$^3$+2$^4$=$(\textsf{11110})_2$) and remove the \textsf{1} from its beginning (in our example \textsf{1110}) to get the \textsf{0,1}--string which corresponds to the given natural number.

Whence, any program can be coded by
a natural number constructively, and if a natural number is a code for a program, then
that program can be decoded from that number algorithmically.
Let us note the ternary relation $\tau$ resembles Kleene's T Predicate (see [Kleene 1936]).

\begin{definition}[The Theory $\mathcal{T}$]\label{def-ttheory}{\rm
Theory $\mathcal{T}$  is axiomatized by $A_1$, $A_2$, $A_3$, $A_4$, $A_5$ and $A_6$ with the following set of sentences in the language $\{0,s,<,\tau\}$:

$\,\,\,A_7: \{\tau(\underline{e},\underline{x},\underline{t})\mid e,x,t\in\mathbb{N} \,\;\&\,\;  \mathbb{N}\models\tau(e,x,t)\}$\hfill\DiamondShadowB
}\end{definition}

The set of axioms $A_7$ consists of the sentences
$\tau(\underline{e},\underline{x},\underline{t})$
(recall that $\underline{n}$ is the $\{0,s\}-$term representing the
number $n\in\mathbb{N}$) such that $\tau(e,x,t)$ holds in reality
(the single--input program with code $e$ halts on input $x$ by time $t$).

\begin{remark}[{\sc re}--completability of $\mathcal{T}$]\label{rem-tisrecom}
The set of sentences in $\mathcal{T}$ is decidable
(given any $n,m,k$ one can decide whether $\tau(n,m,k)$ holds or not),
and thus $\mathcal{T}$ is an {\sc re} theory. It is also {\sc re}--completable,
since its extension by the sentence $\forall x\forall y\forall z\big(\tau(x,y,z)\big)$ is a decidable theory {\rm (}equivalent to the theory of the structure $\langle\mathbb{N},0,s,<\rangle$ which is decidable -- see [Enderton 2001]{\rm )}.
\hfill\DiamondShadowB\end{remark}

The theory $\mathcal{T}$ is undecidable, since the halting problem is undecidable
(see e.g. [Epstein \& Carnielli 2008]):
for any (single--input program with code) $e\in\mathbb{N}$
and any (input) $m\in\mathbb{N}$,
let $\varphi_{e,m}$ be the sentence $\exists z\, \tau(\underline{e},\underline{m},z)$. Then

\begin{tabular}{lcl}
$\mathcal{T}\vdash\varphi_{e,m}$ & $\iff$ & $\mathbb{N}\models \tau(\underline{e},\underline{m},\underline{t}) \text{ for some } t$\\
 & $\iff$ & the program $e$ eventually halts on input $m$.
\end{tabular}

\noindent This can be shown directly, by incorporating the proof of the
undecidability of the halting problem.

\begin{theorem}[Undecidability of $\mathcal{T}$]\label{th-unt}
The theory $\mathcal{T}$ is undecidable.
\end{theorem}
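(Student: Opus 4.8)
The plan is to reduce the halting problem to the derivability relation of $\mathcal{T}$, using the sentences $\varphi_{e,m}=\exists z\,\tau(\underline{e},\underline{m},z)$ introduced above. First I would establish the biconditional
\[
\mathcal{T}\vdash\varphi_{e,m}\iff\text{program }e\text{ halts on input }m,
\]
and then note that the map $(e,m)\mapsto\varphi_{e,m}$ is computable, so that a decision procedure for ${\rm Der}(\mathcal{T})$ would yield one for the halting problem, contradicting its undecidability.

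For the implication from halting to derivability, suppose program $e$ halts on input $m$. Then it halts by some time $t$, so $\mathbb{N}\models\tau(e,m,t)$ and hence the atomic sentence $\tau(\underline{e},\underline{m},\underline{t})$ belongs to $A_7\subseteq\mathcal{T}$. By the half of $\exists$--completeness that holds for every theory (Remark~\ref{rem-completeness}), from $\mathcal{T}\vdash\tau(\underline{e},\underline{m},\underline{t})$ we obtain $\mathcal{T}\vdash\exists z\,\tau(\underline{e},\underline{m},z)$, that is, $\mathcal{T}\vdash\varphi_{e,m}$.

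The converse is where the real content lies, and I would handle it by soundness with respect to the standard model. The key point is that $\langle\mathbb{N},0,s,<\rangle$ equipped with the intended interpretation of $\tau$ is a model of $\mathcal{T}$: the axioms $A_1$--$A_6$ are true of zero, successor and order on $\mathbb{N}$, and $A_7$ is true by its very definition, since it collects exactly the true atomic $\tau$--facts. Verifying $\mathbb{N}\models\mathcal{T}$ is the crux of the argument---though it is wholly elementary here---and everything else is routine. Granting it, soundness of first--order logic gives $\mathcal{T}\vdash\varphi_{e,m}\Rightarrow\mathbb{N}\models\varphi_{e,m}$, so there is a genuine natural number $t$ with $\mathbb{N}\models\tau(e,m,t)$; as the domain of $\mathbb{N}$ consists precisely of the naturals, this $t$ witnesses that program $e$ halts on input $m$ by time $t$.

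Combining the two directions gives the displayed equivalence. Since $\varphi_{e,m}$ is built effectively from $e$ and $m$, the assignment $(e,m)\mapsto\varphi_{e,m}$ is a computable many--one reduction of the halting set to ${\rm Der}(\mathcal{T})$; were the latter decidable, we could decide for any $e$ and $m$ whether $\mathcal{T}\vdash\varphi_{e,m}$, and hence whether $e$ halts on $m$, which is impossible. Thus $\mathcal{T}$ is undecidable. As the text suggests, one can instead inline the diagonal proof of the halting problem's undecidability rather than cite it, but the reduction makes the logical dependence transparent.
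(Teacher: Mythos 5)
Your proof is correct, but it takes the reduction route rather than the paper's inlined diagonalization. The paper's official proof assumes ${\rm Der}(\mathcal{T})$ is decidable, forms the set $\mathcal{D}=\{n\mid\mathcal{T}\not\vdash\exists z\,\tau(\underline{n},\underline{n},z)\}$, takes a program with code $e$ that halts exactly on $\mathcal{D}$, and derives the contradiction ``$e$ halts on $e$ iff $e$ does not halt on $e$'' directly---i.e., it reruns the diagonal argument inside the proof instead of citing the undecidability of the halting problem as a black box (this is exactly the alternative the paper flags with ``this can be shown directly, by incorporating the proof of the undecidability of the halting problem''). You instead establish the biconditional $\mathcal{T}\vdash\varphi_{e,m}\iff e$ halts on $m$ and present $(e,m)\mapsto\varphi_{e,m}$ as a computable many--one reduction of the halting set to ${\rm Der}(\mathcal{T})$. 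Both arguments hinge on the same biconditional, which the paper states just before the theorem but does not verify; your write-up is actually more complete on this point, since you supply the two directions explicitly---$A_7$ plus existential introduction for ``halting implies derivable,'' and soundness over the standard model $\langle\mathbb{N},0,s,<,\tau\rangle\models\mathcal{T}$ for the converse, correctly noting that the witness for the existential is a genuine natural number because the domain is $\mathbb{N}$. What the paper's version buys is self-containedness, in keeping with its stated aim of elementary arguments; what yours buys is modularity and a transparent statement of exactly which external fact (undecidability of halting) carries the weight. Note also that your soundness step is the reason this argument does not survive for arbitrary consistent extensions of $\mathcal{T}$---precisely the limitation the paper addresses later with the theory $\mathcal{S}$.
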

\begin{proof}
If the set ${\rm Der}(\mathcal{T})$ is decidable, then so is the set
\newline\centerline{$\mathcal{D}=\{n\in\mathbb{N}\mid\mathcal{T}\not\vdash\exists z\,\tau(\underline{n},\underline{n},z)\}$.} Whence, there exists a program which on input $n\in\mathbb{N}$ halts whenever $n\in\mathcal{D}$ (and when $n\not\in\mathcal{D}$ then the program does not halt and loops forever). Let $e$ be a code for this (single--input) program. Then

the program (with code) $e$ halts on input
$e$ $\iff$ $\mathbb{N}\models\tau(\underline{e},\underline{e},\underline{k})\text{ for some }k$ $\iff$
$\mathcal{T}\vdash\exists z\,\tau(\underline{e},\underline{e},z)$ $\iff$
$e\not\in\mathcal{D}\iff$ (by $e$'s definition) the program (with code) $e$ does not halt on input $e$. \qquad\qquad Contradiction!
\end{proof}

\begin{remark}\label{remarkt}
Thus far, we have shown that an undecidable theory need not be {\sc re}-incompletable ($\mathcal{T}$). One can also show that an incomplete theory need not be undecidable; to see this consider the theory $\{\exists x\exists y\forall z (z=x\vee z=y)\}$ in the language of equality $(=)$. This theory is decidable (holds in models of at most two elements) but not complete, since can derive neither $\forall x\forall y (x=y)$ nor $\neg\forall x\forall y (x=y)$.
\end{remark}

\begin{corollary}[Undecidability of Consistency]\label{cor-con}
 It is not decidable whether a given {\sc re} theory is consistent or not.
\end{corollary}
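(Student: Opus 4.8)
The plan is to reduce the undecidability of $\mathcal{T}$ (Theorem~\ref{th-unt}) to the problem of deciding consistency. The bridge is the elementary logical equivalence that, for any theory $T$ and any sentence $\varphi$,
\[
T\vdash\varphi \iff T\cup\{\neg\varphi\}\text{ is inconsistent},
\]
which holds in classical first--order logic: if $T\vdash\varphi$ then $T\cup\{\neg\varphi\}$ proves both $\varphi$ and $\neg\varphi$ and so is inconsistent, and conversely if $T\cup\{\neg\varphi\}$ is inconsistent then it proves $\varphi$, whence by reductio (or the deduction theorem) $T\vdash\neg\varphi\rightarrow\varphi$, i.e. $T\vdash\varphi$.

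Assuming for contradiction a uniform algorithm $C$ that on input (a description of) any {\sc re} theory answers whether it is consistent, first I would observe that the map $\varphi\mapsto\mathcal{T}\cup\{\neg\varphi\}$ is effective: since the set of axioms of $\mathcal{T}$ is decidable (Remark~\ref{rem-tisrecom}), adjoining the single sentence $\neg\varphi$ yields again a decidable, hence {\sc re}, set of sentences, and a description of it can be produced algorithmically from $\varphi$. Feeding this description to $C$ and using the equivalence above, I can decide for every sentence $\varphi$ whether $\mathcal{T}\vdash\varphi$: the answer is ``yes'' exactly when $C$ reports $\mathcal{T}\cup\{\neg\varphi\}$ inconsistent. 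This is a decision procedure for ${\rm Der}(\mathcal{T})$, contradicting Theorem~\ref{th-unt}.

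The only point that requires care---and is the closest thing to an obstacle---is the uniformity of the reduction: the hypothetical decider $C$ must accept as input some finite description of an {\sc re} theory, and I must guarantee that from $\varphi$ one can compute such a description for $\mathcal{T}\cup\{\neg\varphi\}$. This is immediate here because $\mathcal{T}$ is presented by a fixed decidable axiom set, so the whole family $\{\mathcal{T}\cup\{\neg\varphi\}\}_\varphi$ is given by a single algorithm parametrised by $\varphi$. Alternatively, one can run the reduction directly through the halting problem by taking $\varphi=\varphi_{e,m}=\exists z\,\tau(\underline{e},\underline{m},z)$: then $\mathcal{T}\cup\{\neg\varphi_{e,m}\}$ is consistent if and only if the program with code $e$ fails to halt on input $m$, so a consistency decider would solve the halting problem. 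Either route delivers the corollary.
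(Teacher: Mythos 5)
Your proposal is correct and follows essentially the same route as the paper: the paper's own proof is exactly the reduction via $[T\vdash\varphi]\iff[T\cup\{\neg\varphi\}$ is inconsistent$]$, concluding that a consistency decider would make every {\sc re} theory decidable, contradicting Theorem~\ref{th-unt}. Your additional remarks on the effectivity of the map $\varphi\mapsto\mathcal{T}\cup\{\neg\varphi\}$ merely spell out a uniformity point the paper leaves implicit.
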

\begin{proof}
By $[T\vdash\varphi]\!\!\iff\!\![T\cup\{\neg\varphi\}$ is inconsistent$]$ if consistency
of {\sc re} theories was decidable then every {\sc re} theory would be decidable too.
\end{proof}

Thus, we have shown the existence of an {\sc re} theory ($\mathcal{T}$) which is undecidable but {\sc re}--completable. Next, we show the existence of an {\sc re} theory which is not {\sc re}--completable. Before that let us note that the above proof works for any (consistent and {\sc re}) theory $T\supseteq\mathcal{T}$ which is sound (i.e., $\mathbb{N}\models T$).

\begin{theorem}\label{pr-unre}
There exists no  complete, sound  and {\sc re} theory extending $\mathcal{T}$.  In other words, the theory $\mathcal{T}$ cannot be soundly {\sc re}--completed. \hfill\ding{111}
\end{theorem}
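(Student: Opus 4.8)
The plan is to play two facts already established in the paper against each other: that a complete {\sc re} theory is automatically decidable, and that every sound {\sc re} extension of $\mathcal{T}$ is undecidable. Combining them will force a contradiction.

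First I would assume, for contradiction, that some theory $T$ is at once complete, sound and {\sc re} with $\mathcal{T}\subseteq T$. Soundness means $\mathbb{N}\models T$, so in particular $T$ has a model and is therefore consistent; thus $T$ is a consistent, complete, {\sc re} theory. By the classical fact recalled after Definition~\ref{def-redec} (recursive enumerability of both $\{\varphi\mid T\vdash\varphi\}$ and $\{\varphi\mid T\vdash\neg\varphi\}$ together with completeness makes $\mathrm{Der}(T)$ and its complement both {\sc re}, hence decidable by Kleene's Complementation Theorem), the theory $T$ is decidable.

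Next I would invoke the observation made just before the statement: the undecidability argument of Theorem~\ref{th-unt} applies to any consistent and {\sc re} theory extending $\mathcal{T}$ that is sound. The only point needing care here---and the real content of the step---is the equivalence
$$T\vdash\exists z\,\tau(\underline{e},\underline{e},z)\iff\text{program }e\text{ halts on input }e.$$
The direction $(\Leftarrow)$ uses only $\mathcal{T}\subseteq T$: if $e$ halts on $e$ by some time $k$ then $\tau(\underline{e},\underline{e},\underline{k})\in A_7\subseteq\mathcal{T}\subseteq T$, whence $T\vdash\exists z\,\tau(\underline{e},\underline{e},z)$. The direction $(\Rightarrow)$ is exactly where soundness is indispensable: from $T\vdash\exists z\,\tau(\underline{e},\underline{e},z)$ and $\mathbb{N}\models T$ we get $\mathbb{N}\models\exists z\,\tau(\underline{e},\underline{e},z)$, i.e.\ $e$ really does halt on $e$. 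With this equivalence in hand, the diagonal set $\mathcal{D}=\{n\mid T\not\vdash\exists z\,\tau(\underline{n},\underline{n},z)\}$ encodes the complement of the self--halting problem, so $\mathrm{Der}(T)$ cannot be decidable, exactly as in the proof of Theorem~\ref{th-unt}.

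Finally I would simply record the collision: the first step makes $\mathrm{Der}(T)$ decidable while the second makes it undecidable, which is impossible. Hence no complete, sound and {\sc re} theory can extend $\mathcal{T}$, so $\mathcal{T}$ admits no sound {\sc re}--completion. I expect the genuinely load--bearing step to be the second one, the transfer of the halting encoding to the extension $T$: everything hinges on soundness upgrading provability in $T$ to truth in $\mathbb{N}$, so that the derivations of $T$ about $\tau$ cannot outrun what the programs actually do.
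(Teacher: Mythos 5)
Your proof is correct and follows essentially the same route as the paper: the remark immediately preceding the theorem notes that the undecidability argument of Theorem~\ref{th-unt} transfers to any sound consistent {\sc re} extension $T\supseteq\mathcal{T}$ (soundness giving the forward direction of the halting equivalence, $A_7$ the backward), and this collides with the classical fact that complete consistent {\sc re} theories are decidable. You have merely written out explicitly the steps the paper leaves implicit behind its end-of-proof box.
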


This is essentially the semantic form of G\"odel's first incompleteness theorem.
As a corollary we have that the theory of
 $\langle\mathbb{N},0,s,<,\tau\rangle$ is not {\sc re} (nor decidable).
 Let us note that the above proof of the first (semantic) incompleteness theorem
 of G\"odel is some
 rephrasing of Kleene's proof (see [Kleene 1936]).
  For a (single--input) program (with code) $e$ let $W_e$ be the set of the inputs
  such that $e$ eventually halts on them, i.e., $W_e\!=\!\{n\mid\mathbb{N}\models\exists z\,\tau(e,n,z)\}$.
  By Turing's results it is known that the set $K=\{n\mid n\in W_n\}$ is {\sc re}
  but not decidable (see e.g. [Epstein \& Carnielli 2008]).
  Indeed, its complement $\overline{K}\!=\!\{n\mid n\not\in W_n\}$ is not {\sc re}
  because every {\sc re} set is of the form $W_m$ for some $m$ (see e.g. [Epstein \& Carnielli 2008]) and for any $n$ we have $n\in (\overline{K}\setminus W_n)\cup (W_n\setminus\overline{K})$.
  On the other hand for any {\sc re} theory $T$ the  set
  \newline\centerline{$\overline{K}_T\!=\!\{n\mid T\vdash ``n\in \overline{K}"\}\!=\!\{n\mid T\vdash\neg\exists z\,\tau(\underline{n},\underline{n},z)\}$} is {\sc re}.
    Now, if $T$ is sound ($\mathbb{N}\models T$) then
    $\overline{K}_T\subseteq \overline{K}$. The inclusion must be
    proper because one of them ($\overline{K}_T$) is {\sc re} and the other one ($\overline{K}$)
    is not {\sc re}. If  $\overline{K}_T=W_m$ (for some $m\in\mathbb{N}$) then  $m\in \overline{K} - \overline{K}_T$: because if $m\in\overline{K}_T$($=W_m$) then $m\in W_m$ and so $m\not\in\overline{K}$, and this contradicts the inclusion $\overline{K}_T\subseteq\overline{K}$; thus $m\not\in\overline{K}_T$ and so $m\not\in W_m$ which implies that $m\in\overline{K}$. Hence, the sentence $\neg\exists z\,\tau(\underline{m},\underline{m},z)$ is true but unprovable in $T$; thus $T$ is incomplete.

\subsection*{An RE--Incompletable Theory}

In the above arguments we used the soundness assumption of $T$ (and $\mathcal{T}$). Below, we will introduce a consistent and {\sc re} theory $\mathcal{S}$ which is not {\sc re}--completable.  Let $\pi$ be a binary function symbol (representing some pairing -- for example $\pi(n,m)=(n+m)^2+n$) whose interpretation in $\mathbb{N}$ satisfies the pairing condition: for any $a,b,a',b'\in\mathbb{N}$ we have $\mathbb{N}\models\pi(a,b)=\pi(a',b')$ if and only if $a=a'$ and $b=b'$.

\begin{definition}[The Theory $\mathcal{S}$]\label{def-theorys}{\rm
The theory $\mathcal{S}$ is the extension of the theory $\mathcal{T}$  by the following sets of sentences in the language $\{0,s,<,\tau,\pi\}$:

 $\,\,\,A_8: \{\neg\tau(\underline{e},\underline{x},\underline{t})\mid e,x,t\in\mathbb{N} \,\;\&\,\;  \mathbb{N}\not\models\tau(e,x,t)\}$

  $\,\,\,A_9: \{\forall x\big(x<\underline{k}\longleftrightarrow\bigvee_{i<k}x=\underline{i}\big)\mid k\in\mathbb{N}\}$
 }\hfill\DiamondShadowB\end{definition}

We now show that the theory $\mathcal{S}$ is not {\sc re}--completable. Let us note that G\"odel's original first incompleteness theorem showed the existence of some theory which was not {\em soundly} {\sc re}--completable. Actually, G\"odel used a syntactic notion weaker than ``soundness", namely {\em $\omega-$consistency}, which is stronger than consistency itself. Nowadays it is known that G\"odel's proof works for an even weaker condition than $\omega-$consistency, the so called {\em 1--consistency} (see [Isaacson 2011]).
It was then Rosser who showed that G\"odel's theorem can be proved without using the $\omega-$consistency (even 1-consistency) assumption (see [Rosser 1936] or e.g. [Boolos, et.\! al.\! 2007]); so the theorem of G\"odel--Rosser states the existence of a consistent and {\sc re} theory which is not {\sc re}--completable.

\begin{theorem}[{\sc re}--incompletability of $\mathcal{S}$]\label{th-sreincom}
If $T$ is a consistent {\sc re} theory that extends $\mathcal{S}$ (i.e., $T\supseteq\mathcal{S}$), then $T$ is not complete.
\end{theorem}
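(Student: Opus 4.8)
The plan is to prove this theorem as a computational analogue of the Gödel--Rosser incompleteness theorem, replacing soundness (used in Theorem~\ref{pr-unre}) by mere consistency. The key idea is that the extra axiom sets $A_8$ and $A_9$ of $\mathcal{S}$ buy us a \emph{symmetric} separation property that does not require $T$ to be true in $\mathbb{N}$. First I would recall the Kleene symmetric-form setup: by Turing's theory there exist two disjoint {\sc re} sets that are \emph{recursively (effectively) inseparable}, the canonical example being
\[
A=\{n\mid \mathbb{N}\models\exists z\,\tau(\underline{n},\underline{n},z)\text{ and this halting computation outputs }0\},\quad
B=\{n\mid\ldots\text{ outputs }1\}.
\]
More to the point for this language, I would define for a given {\sc re} theory $T\supseteq\mathcal{S}$ the two sets
\[
P_T=\{n\mid T\vdash\exists z\,\tau(\underline{n},\underline{n},z)\},\qquad
Q_T=\{n\mid T\vdash\neg\exists z\,\tau(\underline{n},\underline{n},z)\},
\]
both of which are {\sc re} because ${\rm Der}(T)$ is {\sc re}. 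Consistency of $T$ guarantees $P_T\cap Q_T=\varnothing$.

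The central step is to show that $P_T$ and $Q_T$ separate the inseparable pair supplied by computability theory, which forces at least one natural number $n$ to fall outside $P_T\cup Q_T$ --- and for such an $n$ neither $\exists z\,\tau(\underline{n},\underline{n},z)$ nor its negation is provable, giving incompleteness. Concretely, I would use $A_8$ to guarantee that when a program $e$ \emph{does not} halt on $e$ by any particular time $t$, the sentence $\neg\tau(\underline{e},\underline{e},\underline{t})$ is already an axiom, so $T$ proves each such negated instance; and I would use $A_9$ (the axioms pinning down that the only elements below $\underline{k}$ are $\underline{0},\ldots,\underline{k-1}$) to bridge between the infinitely many bounded facts and the unbounded quantifier, so that a \emph{halting} computation forces $T\vdash\exists z\,\tau(\underline{e},\underline{e},z)$ while a genuinely non-halting $e$ cannot have $T$ prove that existential (on pain of separating an inseparable pair). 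The Rosser-style trick is encoded here in the interplay of $A_8$ and $A_9$: they let $T$ ``see'' bounded quantifiers correctly without knowing the answer to the halting problem, which is exactly what replaces the $\omega$-consistency hypothesis.

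In detail, I would argue by contradiction: suppose $T\supseteq\mathcal{S}$ is consistent, {\sc re}, and complete. Then for every $n$ exactly one of $n\in P_T$, $n\in Q_T$ holds, so $P_T$ and $Q_T$ are complementary {\sc re} sets and hence both decidable. I would then exhibit a computable reduction showing that $P_T$ decides membership in a recursively inseparable pair --- using $A_9$ to translate a bounded search ``does $e$ halt on $e$ within some $t$ witnessed by an actual computation'' into provability, and using $A_8$ plus consistency to block false positives --- thereby deciding an undecidable (indeed effectively inseparable) problem, a contradiction. Equivalently and perhaps more cleanly, I would build a single program $d$ by the recursion theorem / diagonalization that inspects the decision procedure for $P_T$ and arranges to halt on its own index $d$ precisely when $T\vdash\neg\exists z\,\tau(\underline{d},\underline{d},z)$; consistency together with $A_8,A_9$ then yields the familiar contradiction in both cases (if $d$ halts, $A_9$ forces $T$ to prove the existential, contradicting the assumed proof of its negation; if $d$ does not halt, $A_8$ supplies all the negated bounded instances, yet completeness would force $T$ to decide the existential the wrong way).

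The hard part, and the step I would spend the most care on, is showing that the bounded information packaged in $A_8$ and $A_9$ actually suffices to make $T$ prove $\exists z\,\tau(\underline{e},\underline{e},z)$ whenever $e$ truly halts on $e$, \emph{and} --- the Rosser direction --- that $T$ cannot prove the existential when $e$ does not halt, using \emph{only consistency} rather than soundness. This is where $A_9$ does its real work: it must let $T$ rule out "phantom" witnesses $z$ below the explicitly listed numerals while leaving the unbounded case open, so that a proof of $\exists z\,\tau(\underline{e},\underline{e},z)$ in a consistent extension genuinely reflects a real halting computation. Verifying that this bounded-quantifier elimination goes through in $\mathcal{S}$ (and is preserved in every consistent {\sc re} extension $T$) is the technical crux; once it is in place, the inseparability of the underlying {\sc re} pair closes the argument.
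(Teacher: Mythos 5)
Your overall strategy (Kleene's symmetric form, effective inseparability, diagonalizing through the theory's own proof--search) is the right family of ideas, and you correctly identify where the crux lies; but the concrete step you lean on is false. You claim that $A_9$ lets $T$ rule out ``phantom'' witnesses so that ``a proof of $\exists z\,\tau(\underline{e},\underline{e},z)$ in a consistent extension genuinely reflects a real halting computation.'' It does not: $A_9$ only pins down the elements \emph{below} each numeral $\underline{k}$; it says nothing about elements lying above every numeral, and a model of $\mathcal{S}$ (e.g.\ one of order type $\omega+\zeta$) can satisfy $\tau(\underline{e},\underline{e},c)$ at a nonstandard $c$ without violating any instance of $A_8$. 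Concretely, $\mathcal{S}\cup\{\exists z\,\tau(\underline{e},\underline{e},z)\}$ is a consistent {\sc re} extension of $\mathcal{S}$ even when $e$ does not halt on $e$. Consequently your set $Q_T=\{n\mid T\vdash\neg\exists z\,\tau(\underline{n},\underline{n},z)\}$ need not contain any {\sc re} set disjoint from the halting set (for $T=\mathcal{S}$ it is empty, since $\neg\exists z\,\tau(\underline{n},\underline{n},z)$ is a $\Pi_1$--type sentence $\mathcal{S}$ never proves), so there is no inseparable pair for $(P_T,Q_T)$ to separate. Your alternative diagonal program $d$ has the same defect: consistency does rule out $T\vdash\neg\exists z\,\tau(\underline{d},\underline{d},z)$, but nothing rules out $T\vdash\exists z\,\tau(\underline{d},\underline{d},z)$ via a nonstandard witness --- the infinitely many axioms $\neg\tau(\underline{d},\underline{d},\underline{t})$ from $A_8$ contradict the existential only $\omega$--inconsistently. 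What you obtain is G\"odel's original theorem (incomplete or $\omega$--inconsistent), not the Rosser strengthening the statement requires.

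The missing idea is the Rosser witness--comparison sentence. The paper takes two programs --- $m$ searching on input $\pi(k,l)$ for a $T$--proof of $\varphi_{k,l}$, and $n$ searching for a $T$--proof of $\neg\varphi_{k,l}$ --- and uses
$\varphi_{n,m}\equiv\exists x\big(\tau(\underline{n},\pi(\underline{n},\underline{m}),x)\wedge\forall y\!<\!x\,\neg\tau(\underline{m},\pi(\underline{n},\underline{m}),y)\big)$,
asserting that the refutation--search halts before the proof--search does. The point of this shape is symmetry: \emph{both} $\varphi_{n,m}$ and $\neg\varphi_{n,m}$ become provable from finitely much true numeral information (one instance of $A_7$ plus finitely many instances of $A_8$, glued together by $A_9$) under the respective hypotheses $T\vdash\varphi_{n,m}$ and $T\vdash\neg\varphi_{n,m}$, because the unbounded quantifier in $\neg\varphi_{n,m}\equiv\forall x\big(\tau(\ldots,x)\rightarrow\exists y\!<\!x\,\tau(\ldots,y)\big)$ is discharged by the provably realized witness $y=\underline{t}$. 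That is exactly the feature the single sentence $\exists z\,\tau(\underline{n},\underline{n},z)$ lacks, and it is what replaces soundness (or $\omega$--consistency) by bare consistency.
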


\begin{proof}
Suppose $T$ is a consistent and {\sc re} extension of $\mathcal{S}$. We show that $T$ is not complete. For any $a,b\in\mathbb{N}$ let $\varphi_{a,b}$ be the sentence
\newline\centerline{$\exists x\big(\tau(\underline{a},\pi(\underline{a},\underline{b}),x)\wedge\forall y\!<\!x\,\neg\tau(\underline{b},\pi(\underline{a},\underline{b}),y)\big)$.}
Let $m$ be a code of a program which on input $p\in\mathbb{N}$ halts
if and only if there are some $k,l\in\mathbb{N}$
such that $p=\pi(k,l)$ (in which case the numbers $k$ and $l$ are unique)
and there exists a proof of $\varphi_{k,l}$ in $T$
(i.e., $T\vdash\varphi_{k,l}$). So, if (i) $p$ is not in the range of the
function $\pi$, or (ii)  there are (unique) $k,l$ such that $p=\pi(k,l)$
and $T\not\vdash\varphi_{k,l}$, then the program does not halt on $p$. Whence, the program with code $m$
searches for a proof of $\varphi_{k,l}$ in $T$ on  input $\pi(k,l)$.

Also, let $n$ be a code for a program which for an input
$p\in\mathbb{N}$ halts if and only if there are some (unique)
$k,l\in\mathbb{N}$ such that $p=\pi(k,l)$  and there exists a proof of
$\neg\varphi_{k,l}$ in $T$ (i.e., $T\vdash\neg\varphi_{k,l}$).
So, if $p$ is not in the range of the function $\pi$ or if there are (unique) $k,l$
such that $p=\pi(k,l)$ and $T\not\vdash\neg\varphi_{k,l}$ then the program with
code $n$ does not halt on $p$. Again, this program searches for a proof of
$\neg\varphi_{k,l}$ in $T$ on input $\pi(k,l)$.

We prove that $\varphi_{n,m}$ is independent from $T$, i.e., $T\not\vdash\varphi_{n,m}$ and $T\not\vdash\neg\varphi_{n,m}$.

(1) If $T\vdash\varphi_{n,m}$ then by the consistency of $T$ we have
$T\not\vdash\neg\varphi_{n,m}$. So, on input $\pi(n,m)$ the program with code
$m$ halts  and the program with code $n$ does not halt. Whence, for some natural
number $t$, $\mathbb{N}\models\tau(m,\pi(n,m),t)$ and for every natural
number $s$, $\mathbb{N}\not\models\tau(n,\pi(n,m),s)$. So by $A_7$ and $A_8$ for that (fixed)
 $t\in\mathbb{N}$ we have
 $T\vdash\tau(\underline{m},\pi(\underline{n},\underline{m}),\underline{t})$,
 and for every $s\in\mathbb{N}$ we have
 $T\vdash\neg\tau(\underline{n},\pi(\underline{n},\underline{m}),\underline{s})$.
 Thus,
 $T\vdash\bigwedge_{i\leqslant t}\!\neg\tau(\underline{n},\pi(\underline{n},\underline{m}),\underline{i})$,
 and so by $A_9$, we conclude that
  $T\vdash\forall x\!\leqslant\!\underline{t}\,\neg\tau(\underline{n},\pi(\underline{n},\underline{m}),x)$,
  therefore

  (i) $T\vdash\forall x\big(\tau(\underline{n},\pi(\underline{n},\underline{m}),x)\!\rightarrow\!x\!>\!\underline{t}\big)$.

\noindent   Also by $T\vdash\tau(\underline{m},\pi(\underline{n},\underline{m}),\underline{t})$
  we get

  (ii) $T\vdash\forall x\!>\!\underline{t}\big(\exists y\!<\!x\,\tau(\underline{m},\pi(\underline{n},\underline{m}),y)\big)$.

  \noindent
  Combining these two conclusions we infer that
  \newline\centerline{$T\vdash\forall x\,\big(\tau(\underline{n},\pi(\underline{n},\underline{m}),x)\rightarrow \exists y\!<\!x\,\tau(\underline{m},\pi(\underline{n},\underline{m}),y)\big)$.}
  On the other hand by the definition of $\varphi_{a,b}$ we have
   \newline\centerline{$\varphi_{n,m}\equiv \exists x\big(\tau(\underline{n},\pi(\underline{n},\underline{m}),x)\wedge\forall y\!<\!x\,\neg\tau(\underline{m},\pi(\underline{n},\underline{m}),y)\big)$,} and so $\neg\varphi_{n,m}\equiv\forall x\big(\tau(\underline{n},\pi(\underline{n},\underline{m}),x)\rightarrow\exists y\!<\!x\,\tau(\underline{m},\pi(\underline{n},\underline{m}),y)\big)$.

\noindent Thus we deduced $T\vdash\neg\varphi_{n,m}$ from the assumption $T\vdash\varphi_{n,m}$;
contradiction! Whence, $T\not\vdash\varphi_{n,m}$.

(2) If $T\vdash\neg\varphi_{n,m}$ then (again) by the consistency of $T$ we have
$T\not\vdash\varphi_{n,m}$. So, on input $\pi(n,m)$ the program with code $n$ halts
and the program with code $m$ does not halt. Whence, for some natural number $t$,  $\mathbb{N}\models\tau(n,\pi(n,m),t)$ and for every natural number
$s$, $\mathbb{N}\not\models\tau(m,\pi(n,m),s)$. Similarly to the above we
can conclude that
$T\vdash\tau(\underline{n},\pi(\underline{n},\underline{m}),\underline{t})$
 and
 $T\vdash\forall y\!<\!\underline{t}\,\neg\tau(\underline{m},\pi(\underline{n},\underline{m}),y)$.
 Thus (for $x=\underline{t}$) we have
   \newline\centerline{$T\vdash\exists x\big(\tau(\underline{n},\pi(\underline{n},\underline{m}),x)\wedge\forall y\!<\!x\,\neg\tau(\underline{m},\pi(\underline{n},\underline{m}),y)\big)$} or $T\vdash\varphi_{n,m}$;
 contradiction! So, $T\not\vdash\neg\varphi_{n,m}$.

\noindent Whence, $T$ is not complete.
\end{proof}

The above proof is effective, in the sense that given an {\sc re}
 theory (by a code for a program that generates its elements) that
 extends $\mathcal{S}$ one can generate (algorithmically) a sentence
  which is independent from that theory. Let us note that for proving
  {\sc re}--incompletability of theories, it suffices to interpret $\mathcal{S}$
  in them. So, the theories $\texttt{Q}$, $\texttt{R}$ (see [Tarski, et.\! al.\! 1953])
   and Peano's Arithmetic $\texttt{PA}$ are all {\sc re}--incompletable (or, essentially undecidable).

\section*{Rice's Theorem for RE Theories}
In this last section
 we show a variant of Rice's Theorem for logical theories.  In [Oliveria \& Carnielli 2008] the authors (claimed to) had shown that an analogue of Rice's theorem holds for finitely axiomatizable first order theories. Unfortunately, the result was too beautiful to be true ([Oliveria \& Carnielli 2009]) and it turned out that Rice's theorem cannot hold for finite theories. However, we show that this theorem holds for {\sc re} theories, a result which is not too different from Rice's original theorem.
Recall that two theories $T_1$ and $T_2$ are equivalent when they prove the same (and exactly the same) sentences (i.e., ${\rm Der}(T_1)={\rm Der}(T_2)$).

\begin{definition}[Property of Theories]\label{def-property}{\rm
A property of (first order logical) theories is a set of natural numbers
$\mathcal{P}\subseteq\mathbb{N}$ such that for any $m,n\in\mathbb{N}$ if the
theory generated by the program with code $m$ is equivalent to the theory
generated by the program with code $n$, then $m\in\mathcal{P}\longleftrightarrow n\in\mathcal{P}$.

So, a theory is said to have the property $\mathcal{P}$ when a code for generating its set belongs to $\mathcal{P}$.
A property of theories is a non--trivial property when some theories have that property and some do not.
 }\hfill\DiamondShadowB\end{definition}

\begin{example}\label{ex-property}{\rm
The followings are some non--trivial properties of {\sc re} theories:

$\bullet\,$ {\sl Universal Axiomatizability}: theories axiomatizable by sentences of the

\quad form $\forall x_1\ldots\forall x_n\theta(x_1,\ldots,x_n)$ for quantifier--free $\theta$'s;

$\bullet\,$ {\sl Finite Axiomatizability}: being equivalent to a finite theory;

$\bullet\,$ {\sl Decidability} (of the set of the theorems of  the theory);


$\bullet\,$ {\sl Having a Finite Model};

$\bullet\,$ {\sl Completeness};

$\bullet\,$ {\sl Consistency}.
}\hfill\DiamondShadowB\end{example}

\begin{remark}\label{rem-property}
For any non--trivial property $\mathcal{P}$ either (i) no inconsistent theory
has the property $\mathcal{P}$ or (ii) all inconsistent theories have
the property $\mathcal{P}$. Because when an inconsistent theory breaks into $\mathcal{P}$
then all the other inconsistent theories (being equivalent to each other) come in.
\hfill\DiamondShadowB\end{remark}

Before proving Rice's Theorem let us have a look at (a variant of) Craig's trick.
For an {\sc re} theory $T=\{T_1,T_2,T_3,\cdots\}$ the proof predicate ``$p$ is a proof
of $\varphi$ in $T$", for given sequence of sentences $p$ and sentence $\varphi$,
might not
be decidable when the set $\{T_1,T_2,T_3,\cdots\}$ is not decidable. Note that ``the sequence
$p$ is a proof of $\varphi$ in $T$" when every element of $p$ is either a (first order)
logical axiom (which can be decided) or is an element of $T$ or can be deduced from
two previous elements by an inference rule, and the last element of $p$ is $\varphi$.
Thus decidability of the set $T$ is essential
for the decidability of the proof sequences of $T$.
But if we consider the theory  $\widehat{T}=\{\widehat{T}_1,\widehat{T}_2,\widehat{T}_3,\cdots\}$
where $\widehat{T}_m=\bigwedge_{i\leqslant m}T_i$ then the set $\widehat{T}$ is decidable, because
if $\mathcal{A}$ is an algorithm that outputs (generates)  the infinite sequence $\langle T_1,T_2,T_3,\cdots\rangle$ in this order (in case $T$ is finite the sequence is eventually constant), then for any given sentence $\psi$ we can decide if $\psi\in\widehat{T}$ or not
by checking if $\psi$ is a conjunction of some sentences
$\psi=\psi_1\wedge\cdots\wedge\psi_m$ (if $\psi$ is not of this form, then already
$\psi\not\in\widehat{T}$)
 such that $\mathcal{A}$'s $i$th output is $\psi_i$ for $i=1,\ldots,m$
 (if not then again $\psi\not\in\widehat{T}$). Whence the predicate of
 being a proof of $\varphi$ in $\widehat{T}$,
i.e.,
  ``the sequence
 $p$ is a proof of sentence $\varphi$ in $\widehat{T}$ ", is decidable; moreover
  the theories $T$ and $\widehat{T}$ are equivalent, and the theory $\widehat{T}$ can be algorithmically constructed from   given theory $T$.

\begin{theorem}[Analogue of Rice's Theorem]\label{th-rice}
All the non--trivial properties of {\sc re} theories are undecidable.
\end{theorem}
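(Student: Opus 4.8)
The plan is to mimic the classical proof of Rice's theorem by reducing the undecidable halting set $K=\{n\mid n\in W_n\}$ --- whose undecidability is recorded in the discussion after Theorem~\ref{pr-unre} --- to the problem of deciding membership in $\mathcal{P}$. First I would normalize the property using Remark~\ref{rem-property}: replacing $\mathcal{P}$ by its complement $\mathbb{N}\setminus\mathcal{P}$ if necessary (which is again non--trivial, is equivalence--invariant, and is decidable exactly when $\mathcal{P}$ is), I may assume without loss of generality that \emph{no inconsistent theory has the property} $\mathcal{P}$. By non--triviality some theory does have $\mathcal{P}$; I fix one such $T_\star$ together with a program $\mathcal{A}_\star$ that generates it. Note that $T_\star$ is automatically consistent, since inconsistent theories now lack $\mathcal{P}$.

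The core is a uniform gating construction. Given an arbitrary $e\in\mathbb{N}$ I would build, uniformly in $e$, an input--free program $\mathcal{B}_e$ that does two things in parallel: it runs $\mathcal{A}_\star$, copying out every axiom of $T_\star$; and it simulates program $e$ on input $e$, and \emph{if and when} that simulation halts it additionally outputs the two sentences $\chi$ and $\neg\chi$ for a fixed sentence $\chi$. Then the theory generated by $\mathcal{B}_e$ is exactly $T_\star$ when $e\notin K$ (the simulation never halts, so the extra contradiction is never triggered), and it is $T_\star\cup\{\chi,\neg\chi\}$, an inconsistent theory, when $e\in K$. Consequently the theory generated by $\mathcal{B}_e$ has property $\mathcal{P}$ if and only if $e\notin K$: in the first case it literally equals the set generated by $\mathcal{A}_\star$, hence is $T_\star\in\mathcal{P}$, while in the second case it is inconsistent and therefore, by our normalization, not in $\mathcal{P}$. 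Here I use crucially that $\mathcal{P}$ respects the equivalence of theories, so only the generated set of sentences matters.

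Finally I would assemble the reduction. The assignment $e\mapsto\langle\text{code of }\mathcal{B}_e\rangle$ is a total computable function, since $\mathcal{B}_e$ is described uniformly in the parameter $e$ (this is the only point where any ``$s$--$m$--$n$''-style uniformity is invoked, and it is immediate from the construction). If $\mathcal{P}$ were decidable, then composing this computable map with the decision procedure for $\mathcal{P}$ and negating the answer would decide $K$, contradicting the undecidability of $K$; hence $\mathcal{P}$ is undecidable, and the same conclusion transfers back to the original un--normalized property. The point needing the most care, and the expected main obstacle, is precisely this bookkeeping: one must verify that $\mathcal{B}_e$ is a legitimate input--free generator depending effectively on $e$, and that the two possible outputs (namely $T_\star$ versus an inconsistent theory) are genuinely separated by $\mathcal{P}$ no matter which branch of Remark~\ref{rem-property} we reduced to.
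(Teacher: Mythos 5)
Your proof is correct, and it rests on the same central device as the paper's: normalize via Remark~\ref{rem-property} so that no inconsistent theory has $\mathcal{P}$, then build a hybrid generator that emits a fixed consistent theory possessing $\mathcal{P}$ unless some undecidable event is witnessed, at which point it injects a contradiction, so that the two outcomes are separated precisely by the consistent/inconsistent dichotomy. The difference lies in the source of undecidability and the bookkeeping it forces. You reduce directly from the halting set $K$ (whose undecidability the paper records after Theorem~\ref{pr-unre}): the gating event is ``program $e$ halts on input $e$,'' which is monitored by dovetailing the simulation of $e$ alongside the enumeration of $T_\star$, and no further machinery is needed. The paper instead reduces from the undecidability of \emph{consistency} of {\sc re} theories (Corollary~\ref{cor-con}, itself obtained from Theorem~\ref{th-unt} and hence from the halting problem): given an {\sc re} theory $T$, it sets $T'_k=S_k$ unless $k$ codes a proof of $\psi\wedge\neg\psi$ in $\widehat{T}$, in which case $T'_k=\psi\wedge\neg\psi$. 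That gating event must be decidable in $k$ for $T'$ to be effectively generated, which is why the paper first passes to the Craig-style decidable axiomatization $\widehat{T}$. Your route is the classical Rice argument, shorter and independent of Corollary~\ref{cor-con} and of Craig's trick; the paper's route buys the explicit chain ``halting $\Rightarrow$ undecidability of derivability $\Rightarrow$ undecidability of consistency $\Rightarrow$ Rice,'' exhibiting consistency itself as the pivotal undecidable property. Both correctly invoke equivalence-invariance of $\mathcal{P}$ (Definition~\ref{def-property}) so that only the generated set of sentences matters, and both transfer the conclusion back through the complementation step.
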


\begin{proof}
Assume a non--trivial property $\mathcal{P}$ of {\sc re} theories is decidable, i.e.,
there exists an algorithm which on input $n\in\mathbb{N}$ decides
whether $n\in\mathcal{P}$ (i.e., whether the theory generated by the program with
code $n$ has the property $\mathcal{P}$). Without loss of
generality we can assume that no inconsistent theory has the
property $\mathcal{P}$ (otherwise take the complement of $\mathcal{P}$).
Fix a consistent {\sc re} theory, say,   $S=\{S_1,S_2,S_3,\cdots\}$  that has the property
$\mathcal{P}$ ($S$ could be finite in which case the sequence $\{S_i\}_i$ is
eventually constant) and fix a sentence $\psi$.
For any given {\sc re} theory $T$ we construct the theory
$T'=\{T'_1,T'_2,T'_3,\cdots\}$ as follows: let $T'_k=S_k$ if $k$ is not a
(code of a) proof of $\psi\wedge\neg\psi$ in $\widehat{T}$ (see above);
otherwise let $T'_k=\psi\wedge\neg\psi$.
Note that this construction is algorithmic, since being a
(code for a) proof  of $\psi\wedge\neg\psi$ in the decidable set $\widehat{T}$
is decidable (and the theory $\widehat{T}$ can be constructed algorithmically for a
given {\sc re} theory $T$); moreover, the theory $T'$
is {\sc re}. Now, if $T$ is consistent then $T'=S$ has the
property $\mathcal{P}$ and if $T$ is not consistent then $T'$ is an inconsistent
theory (because then for some $k$, $T'_k=\psi\wedge\neg\psi$) and so does not have the property $\mathcal{P}$. Whence, for any {\sc re}
theory $T$ we have the {\sc re} theory $T'$ in such a way that
\newline\centerline{$\big[T$ is consistent$\big]\iff\big[T'$ has the property $\mathcal{P}\big]$.} Now by
Corollary \ref{cor-con} the property $\mathcal{P}$ is not decidable.
\end{proof}

Finally, we note that as a corollary to the above theorem,
finite axiomatizability of {\sc re} theories is not a decidable property;
and there exists a decidable non--trivial property for finite theories:
for a fixed decidable theory (like the theory $\{A_1,\cdots,A_6\}$ in Definition~\ref{def-ttheory}), say $F$,
it is decidable whether a given finite theory $T$ is included in $F$
(i.e., if $F$ can prove all the  sentences of $T$).


\section*{References}

\noindent   {\sc Beklemishev}, Lev D. \\
 (2010) \  G\"odel Incompleteness Theorems and the Limits of Their Applicability: I, {\it Russian Mathematical Surveys} {65}(5) 857--899.

\bigskip

\noindent {\sc  Berto}, Francesco\\
 (2009) \ {\it There's Something About G\"odel: the complete guide to the incompleteness theorem}, Wiley--Blackwell.

\bigskip

\noindent {\sc Boolos}, George S.  \&  {\sc Burgess}, John P. \&  {\sc Jeffrey}, Richard C.\\ (2007) \ {\it Computability and Logic}, Cambridge University Press   (5th ed).

\bigskip

\noindent  {\sc  Oliveira}, Igor Carboni \& {\sc Carnielli}, Walter \\  (2008) \ The Ricean Objection: an analogue of Rice's theorem for first--order theories,    {\it Logic Journal of the IGPL} {16}(6)   585--590.
\\
  (2009) \
 Erratum to ``The Ricean Objection: an analogue of Rice's theorem for first--order theories",    {\it Logic Journal of the IGPL} {17}(6)   803--804.

\bigskip

\noindent  {\sc Chiswell}, Ian  \& {\sc Hodges}, Wilfrid \\ (2007) \ {\it Mathematical Logic}, Oxford University Press.

\bigskip

\noindent   {\sc Craig}, William \\  (1953) \ On Axiomatizability Within a System, \\ {\it The Journal of Symbolic Logic} 18(1)   30--32.

\bigskip

\noindent
 {\sc  Enderton}, Herbert B. \\ (2001) \ {\it A Mathematical Introduction to Logic}, Academic Press  (2nd ed).

\bigskip

\noindent {\sc  Epstein}, Richard L. \& {\sc Carnielli }, Walter A. \\ (2008)  \
{\it Computability: computable functions, logic, and the foundations of mathematics},
Advanced Reasoning Forum  (3rd ed).

\bigskip

\noindent  {\sc  Isaacson}, Daniel \\ (2011) \ Necessary and Sufficient Conditions for Undecidability of the G\"odel Sentence and its Truth,
 in:   {\it Logic, Mathematics, Philosophy, Vintage Enthusiasms: essays in honour of John L. Bell}, D. DeVidi \&  M. Hallett \&  P. Clark (Eds.)
 Springer, 135--152. {\small {\sf doi: 10.1007/978-94-007-0214-1$_-$7}}

\bigskip

\noindent   {\sc  Kaye}, Richard W. \\ (2007) \ {\it The Mathematics of Logic: a guide to completeness theorems and their applications}, Cambridge University Press.

\bigskip

\noindent   {\sc Kleene},  Stephen C. \\  (1936) \ General Recursive Functions of Natural Numbers,
 \\ {\it Mathematische Annalen} {112}(1)   727--742.
\\ (1950) \  A Symmetric Form of G\"odel's Theorem, \\
  {\it Indagationes Mathematicae} {12},   244--246. \\
(1952) \  {\it Introduction to Metamathematics}, North-Holland.

\bigskip

\noindent   {\sc  Lafitte }, Gr\'egory \\ (2009) \
Busy Beavers Gone Wild,
 in:  {\it  Proceedings of the International Workshop on The Complexity of Simple Programs
 (CSP, Cork, Ireland, 6--7 December 2008)}, T. Neary \& D. Woods \& T. Seda \& N. Murphy (Eds.),
  Electronic Proceedings in Theoretical Computer Science~{1},
 Open Publishing Association, 123--129.  	 {\small {\sf doi: 10.4204/EPTCS.1.12}}

\bigskip

\noindent  {\sc  Li}, Ming \&  {\sc Vit\'anyi}, Paul M.B. \\ (2008) \
{\it An Introduction to Kolmogorov Complexity and Its Applications},  \\
Springer (3rd ed).

\bigskip

\noindent  {\sc  Rosser}, Barkley \\ (1936) \ Extensions of Some Theorems of G\"odel and Church, \\
 {\it The Journal of Symbolic Logic} {1}(3)   87--91.

\bigskip

\noindent  {\sc  Tarski}, Alfred (with Mostowski, Andrzej  \&  Robinson, Raphael M.) \\  (1953) \ {\it Undecidable Theories}, \\ North--Holland; reprinted by Dover Publications 2010.

\bigskip

\noindent
 {\sc  van Dalen}, Dirk \\  (2013) \ {\it Logic and Structure}, Springer  (5th ed).

\bigskip

\noindent  {\sc  Wasserman}, Wayne U. \\ (2008) \
It is `Pulling a Rabbit Out of the Hat': typical diagonal lemma `proofs' beg the question,
  {\it Social Science Research Network}. {\small {\sf doi: 10.2139/ssrn.1129038}}

\bigskip

\vfill

{\small

{\sc Saeed Salehi}

\quad Department of Mathematical Sciences, University of Tabriz,

 \qquad 29 Bahman Boulevard, P.O.Box~51666--17766, Tabriz, IRAN.

 \quad School of Mathematics, Institute for Research in Fundamental  Sciences (IPM),

 \qquad Niavaran, P.O. Box 19395--5746, Tehran, IRAN.

  \quad
  E-mail: \ {\tt salehipour@tabrizu.ac.ir}

  \quad
  URL: \ {\tt http://saeedsalehi.ir/}

}

\end{document}